\documentclass[12pt]{amsart}

\usepackage{amsmath,amssymb,amsthm}
\usepackage{hyperref}
\usepackage[utf8]{inputenc}
\usepackage{xcolor}
\usepackage{array}
\usepackage[margin=1in]{geometry}
\newtheorem{theorem}{Theorem}[section]
\newtheorem{lemma}[theorem]{Lemma}
\newtheorem{proposition}[theorem]{Proposition}
\newtheorem{corollary}[theorem]{Corollary}
\theoremstyle{definition}
\newtheorem{definition}[theorem]{Definition}
\newtheorem{example}[theorem]{Example}
\newtheorem{remark}[theorem]{Remark}

\newcommand{\C}{\mathbb{C}}
\newcommand{\R}{\mathbb{R}}
\newcommand{\Z}{\mathbb{Z}}

\newcommand{\HN}{H_N}

\title{Unitary Invariants of the Finite Heisenberg Group}
\author{Josh Katz}
\address{The MITRE Corporation}
\email{joshua.katz@mitre.org}
\date{\today}

\begin{document}
\maketitle
\begin{center}
{\small Approved for Public Release; Distribution Unlimited. Public Release Case Number 26-0143. Affiliation with the MITRE Corporation is for identification purposes only and is not intended to convey or imply MITRE’s concurrence with, or support for, the positions, opinions, or viewpoints expressed by the
author. \copyright 2026 The MITRE Corporation. ALL RIGHTS RESERVED.}
\end{center}

\begin{abstract}
Polynomial invariants of a group action often appear only in high degree, and in many representations the invariant ring imposes severe degree constraints before any nontrivial invariants can occur. In contrast, the larger class of unitary invariants---polynomials in both the variables and their conjugates---typically exhibits very different behavior, and their separating power is comparatively unexplored. 

We highlight this contrast in the setting of the finite Heisenberg group $\HN$. Although the polynomial invariant ring $\C[V]^{\HN}$ contains no nontrivial elements below degree $N$, we show that degree-six unitary invariants are already sufficient to separate generic $\HN$-orbits up to a global phase factor. These invariants arise from cubic equations involving the magnitudes of a vector and its discrete Fourier transform. A single polynomial invariant in degree $N$ then resolves the remaining global phase, yielding full generic orbit separation. Our proof utilizes fundamental results from phase retrieval \cite{beinert2018enforcing}. Along the way we will also explore the utility of unitary invariants in obtaining improved degree bounds for representations of cyclic groups.

This paper provides a concrete example in which the minimal separating degree for unitary invariants is dramatically lower than the minimal degree for polynomial invariants. This is a question which has come up in multiple works \cite{edidin2024orbit, bandeira2023estimation, edidin2025orbit, edidinreflection2026} but has thus far lacked conclusive results.
\end{abstract}

\section{Introduction}

Invariant theory traditionally focuses on the polynomial invariant ring $\C[V]^G$ associated to a linear action of a group $G$ on a complex vector space $V$. Classical questions ask for bounds on the number and degrees of generators for this ring. A more recent line of inquiry, initiated by Derksen and Kemper \cite{derksen2015computational}, concerns separating invariants: collections of polynomial invariants that distinguish all distinct $G$-orbits \cite{domokos2017degree, kohls2010degree}. Although these degree bounds are generally lower than the degree necessary to generate the invariant ring, they can still be quite large. Motivated by applied and computational questions, this has lead many authors to study the question of degree bounds necessary to separate generic orbits \cite{bandeira2023estimation, blum-smith2024degree, edidinreflection2026,edidin2025orbit}. Remarkably, in many cases of interest degree 3 invariants are sufficient to separate generic orbits.

When $V$ is a complex representation of a compact group, one may enlarge the algebra of invariants by allowing polynomials in both the variables and their conjugates. This yields the ring of \emph{unitary invariants}
\[
\C[x_1, \ldots, x_N, \bar{x}_1, \ldots, \bar{x}_N]^G,
\]
which is the ring of real algebraic functions on $V$ invariant under $G$. These invariants are ubiquitous in signal processing and harmonic analysis, yet their algebraic properties and separating power are far less understood than their polynomial counterparts. A central theme of this paper is that the minimal degrees needed to separate generic orbits using polynomial invariants and unitary invariants can differ dramatically.

\begin{remark}
Throughout this paper we use the term generic in the sense of real algebraic
geometry. In particular we say that a property holds for generic vectors in $\mathbb{C}^n$ if
there is an open set U whose complement is a real algebraic subset of dimension $m<n$ such
that the property holds on U. Such open sets are dense in the Euclidean topology and have full Lebesgue measure.
\end{remark}

\begin{definition}\label{def:gamma}
Let $\gamma(V, G)$ denote the smallest integer $d$ such that there exists a set of polynomial invariants of degree $\leq d$ that separates generic $G$-orbits. Likewise, let $\gamma^U(V, G)$ denote the smallest $d$ such that a set of unitary invariants of degree $\leq d$ separates generic orbits.
\end{definition}

The quantity $\gamma(V, G)$ is often extremely constrained by the representation theory of V (for abelian groups this will be dependent on the linear relations among the weights of the representation), while $\gamma^U(V, G)$ allows access to a larger class of invariants. In many cases, $\gamma^U(V, G)$ is much smaller than $\gamma(V, G)$.

\subsection{Motivating examples}

We begin with two classical examples in which polynomial invariants fail to separate orbits, yet low-degree unitary invariants succeed.

\begin{example}[Circle action on bandlimited functions]\label{ex:circle}
Let $G = S^1$ act diagonally on $V = L^2(S^1)_d=\{f\in L^2(S^1)|\hat{f}[j]=0, \forall |j|>d\}$. In the Fourier basis this action is represented by
\[
(\hat{f}[-d], \ldots, \hat{f}[d]) \mapsto (e^{-id\theta}\hat{f}[-d], \ldots, e^{id\theta}\hat{f}[d]).
\]
Since $\C[V]^{S^1} = \C[V]^{\C^*}$, any polynomial invariant can recover an orbit only up to $\C^*$; thus no finite-degree polynomial separating set exists. By contrast, the cubic unitary invariants
\[
\hat{f}[k]\hat{f}[s]\overline{\hat{f}[k+s]}, \quad -d \leq k, s \leq d,
\]
known as the bispectrum, determine the $S^1$-orbit of any $f$ with nonvanishing Fourier coefficients \cite{tukey1953spectral, smach2008generalized}. Hence $\gamma^U(V, S^1) = 3$ while $\gamma(V, S^1) = \infty$.
\end{example}

\begin{example}[Circle action with weights $1$ and $2$]\label{ex:weights}
Let $S^1$ act on $\C^2 = \C_1 \oplus \C_2$ via weights $1$ and $2$. There are no nonconstant polynomial invariants, yet the degree-three unitary invariants
\[
x_1\bar{x}_1, \quad x_2\bar{x}_2, \quad x_1^2\bar{x}_2
\]
separate all orbits with $x_1 x_2 \neq 0$. Again, $\gamma^U(V,S^1) = 3$ while $\gamma(V,S^1) = \infty$.
\end{example}

We give a general characterization of this behavior in the following proposition. 
\begin{proposition}\label{prop:finite-iff}
Let $G$ be a compact group with $V$ a complex representation. Then $\gamma(V, G)$ is finite if and only if $G$ is finite.
\end{proposition}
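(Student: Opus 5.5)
The plan is to prove the two directions separately. Throughout I assume $G$ acts faithfully on $V$, or else replace $G$ by its image in $GL(V)$; for a non-faithful action the statement should be read with $G$ replaced by that image.

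\emph{Finite $\Rightarrow$ $\gamma(V,G)<\infty$.} This is classical, and I would show that polynomial invariants separate \emph{all} orbits.
\begin{itemize}
\item Let $v,w$ lie in distinct $G$-orbits. Since $Gv$ and $Gw$ are finite disjoint sets, Lagrange interpolation gives a polynomial $f$ with $f\equiv 1$ on $Gv$ and $f\equiv 0$ on $Gw$.
\item The Reynolds average $R(f)=\frac{1}{|G|}\sum_{g} g\cdot f$ is invariant, and $R(f)(v)=1\neq 0=R(f)(w)$.
\item Hence $\C[V]^G$ separates orbits. Since $\C[V]^G$ is finitely generated (Noether, with generators of degree $\le |G|$), its generators form a separating set of bounded degree. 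So $\gamma(V,G)\le |G|$.
\end{itemize}

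\emph{$G$ infinite $\Rightarrow$ $\gamma(V,G)=\infty$.} The idea follows Example~\ref{ex:circle}: polynomial invariants cannot tell a $G$-orbit apart from the orbit of the complexification.
\begin{itemize}
\item A compact subgroup of $GL(V)$ is a real algebraic group. Let $G_\C$ be its Zariski closure in $GL(V)$ (equivalently, its complexification), a complex reductive group with $\dim_\C G_\C=\dim_\R G$.
\item Since $G$ is infinite and compact, it is a Lie group of positive dimension, so $\dim_\C G_\C>0$.
\item Every $f\in\C[V]^G$ is $G_\C$-invariant. Indeed, for fixed $v$ the function $g\mapsto f(gv)-f(v)$ is a regular function on $GL(V)$ vanishing on $G$, hence on its Zariski closure $G_\C$.
\item Consequently any family of polynomial invariants, of any degree, is constant on $G_\C$-orbits. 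It therefore separates generic $G$-orbits only if $G_\C v=Gv$ for $v$ in a dense open set $U$.
\end{itemize}

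It remains to rule out $G_\C v=Gv$ on $U$.
\begin{itemize}
\item Suppose $G_\C v=Gv$ for $v\in U$. Then $G_\C v$ is compact in the Euclidean topology.
\item An orbit of an algebraic group is a locally closed smooth subvariety of $V\cong\C^n$. Being compact, it is Euclidean-closed and hence a closed subvariety.
\item A compact closed complex subvariety of affine space is finite, since the coordinate functions are holomorphic on a compact analytic set and are therefore locally constant.
\item A finite orbit forces the identity component $G_\C^\circ$ to fix $v$.
\item As this holds on the dense set $U$, $G_\C^\circ$ acts trivially on $V$. This contradicts faithfulness together with $\dim G_\C^\circ>0$.
\end{itemize}
Hence no set of polynomial invariants separates generic $G$-orbits, and $\gamma(V,G)=\infty$.

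The step I expect to need the most care is the identification in the first bullet of this direction: that the Zariski closure of a compact $G$ is an algebraic group of complex dimension equal to $\dim_\R G$. This rests on Chevalley's theorem that compact linear groups are real algebraic, together with the fact that $G$ is totally real in $G_\C$. I would cite these standard facts about complexifications of compact groups rather than reprove them.
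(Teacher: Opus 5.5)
Your proof is correct and follows the same route as the paper's sketch: classical separation for finite $G$, and for infinite $G$ the identity $\C[V]^G=\C[V]^{G_\C}$ with $G_\C$ positive-dimensional. It is, however, more complete than the paper's proof. Your compactness argument actually proves that generic $G_\C$-orbits strictly contain $G$-orbits, which the paper only asserts (``may lie in the same $G_\C$-orbit''). Your faithfulness caveat is a genuine correction to the statement, since e.g.\ $S^1$ acting trivially on $\C$ has $\gamma=1$.

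The step you flag as delicate can also be dropped. You only ever use that the Zariski closure $\overline{G}\subset GL(V)$ is an algebraic group containing $G$. Since $G$ is infinite, $\overline{G}$ is infinite and therefore positive-dimensional, so neither Chevalley's theorem nor $\dim_\C G_\C=\dim_\R G$ is needed.
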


This result is standard in representation theory. A good refererence on complexifications of compact Lie groups can be found in the lecture notes \cite{bergeronreductivegroups}.
\begin{proof}
For compact $G$, we have $\C[V]^G = \C[V]^{G_\C}$ where $G_\C$ is the complexification. When $\dim G > 0$, the complexification $G_\C$ is a positive-dimensional algebraic group, and distinct $G$-orbits may lie in the same $G_\C$-orbit. In particular, the polynomial invariants cannot separate $G$-orbits when $G$ has positive dimension. Conversely, when $G$ is finite, $G_\C = G$ and classical results guarantee that polynomial invariants separate orbits.
\end{proof}

Remarkably, although $\gamma(V, G)$ will generally not be finite for compact groups with positive dimension, $\gamma^U(V, G)$ is always finite.

\begin{theorem}\label{thm:unitary-separate}
For any complex representation $V$ of a compact group $G$, the unitary invariants of $V$ separate $G$-orbits. In particular, $\gamma^U(V, G)$ is finite.
\end{theorem}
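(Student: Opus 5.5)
The plan is to identify $V \cong \C^N$ with the real vector space $W = \R^{2N}$. Unitary invariants are then exactly the complexified real polynomial invariants $\R[W]^G \otimes \C$. This works because $x_j = u_j + i v_j$ and $\bar x_j = u_j - i v_j$ generate the same algebra as the real coordinates $u_j, v_j$, and $G$-invariance is preserved under this linear change of variables. Viewed this way, $G$ acts on $W$ by real linear maps. Averaging an inner product with Haar measure, we may assume $G$ acts by orthogonal transformations, i.e.\ through a compact subgroup of $O(2N)$.

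The key step is the classical fact that real polynomial invariants of a compact group separate orbits. Take $w_1, w_2 \in W$ with disjoint orbits $Gw_1$ and $Gw_2$. These orbits are compact, being continuous images of the compact group $G$, and they are disjoint, so there is a continuous function $f$ on $W$ with $f \equiv 0$ on $Gw_1$ and $f \equiv 1$ on $Gw_2$. By Stone--Weierstrass, on a large closed ball $B$ containing both orbits we can choose a real polynomial $p$ with $\sup_B |p - f| < 1/3$. Now average with the normalized Haar measure:
\[
\tilde p(w) = \int_G p(g\cdot w)\,dg .
\]
Since $G$ acts linearly, $\tilde p$ is again a polynomial of degree at most $\deg p$; its coefficients are integrals of continuous matrix-coefficient functions. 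It is $G$-invariant, and $B$ is $G$-stable because the action is orthogonal. Hence $|\tilde p - 0| < 1/3$ on $Gw_1$ and $|\tilde p - 1| < 1/3$ on $Gw_2$, so $\tilde p(w_1) \ne \tilde p(w_2)$. Rewriting $\tilde p$ in terms of $x, \bar x$ gives a unitary invariant separating the two orbits.

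For finiteness of $\gamma^U(V,G)$, the above shows only that the full (possibly infinite) set of unitary invariants separates orbits. The cleanest route is Hilbert's finiteness theorem for the reductive (compact) group $G$ acting on $W$: $\R[W]^G$ is finitely generated, say by $f_1,\dots,f_m$. Since the generators determine every invariant, they separate all orbits, so $\gamma^U(V,G) \le \max_i \deg f_i < \infty$. For a compact Lie group this follows from averaging (the Reynolds operator) plus the Hilbert basis theorem in the usual way. If $G$ is merely a compact group rather than a Lie group, the image of $G$ in $GL(V)$ is a closed subgroup, hence a compact Lie group, and it has the same orbits. Alternatively, one avoids finite generation altogether: the invariants of degree $\le d$ form an increasing chain of finite-dimensional spaces, and the separating conditions stabilize by a Noetherian argument on the real varieties $\{(w_1,w_2): f(w_1)=f(w_2)\ \forall f \text{ of degree} \le d\}$. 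These form a descending chain of real algebraic sets, which stabilizes, and the limit equals the orbit equivalence relation by the separation step.

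The main obstacle is this last point, which passes from "separating" to "separating in bounded degree"; the Stone--Weierstrass step gives no degree control. I would rely on Hilbert's theorem through the Reynolds operator, reducing first to the case where $G$ is a compact Lie group by replacing it with its image in $GL(V)$.
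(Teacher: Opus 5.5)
Your proof is correct and follows the paper's route: you realify $V$ to $\R^{2N}$, identify the unitary invariants with $\R[\R^{2N}]^G\otimes_{\R}\C$, and use the fact that real polynomial invariants of a compact group separate orbits. The differences are that you prove this separation lemma directly (Stone--Weierstrass plus Haar averaging) where the paper cites it, and you justify finiteness of $\gamma^U(V,G)$ via Hilbert's finiteness theorem or the descending-chain argument, a step the paper's proof leaves implicit.
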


This is proved via the following lemma.

\begin{lemma}[\cite{bandeira2023estimation}]
Let G be a compact group and let $V$ be a finite dimensional representation over $\mathbb{R}$. The polynomial invariants separate G orbits of V.
\end{lemma}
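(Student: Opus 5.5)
The plan is the classical averaging argument: for a compact group acting on a real vector space, build an invariant polynomial that vanishes on one orbit and not on another. Fix $V$, a finite dimensional real representation of the compact group $G$, and equip it with a $G$-invariant inner product by averaging an arbitrary inner product against normalized Haar measure. Then $G$ acts orthogonally. Let $x,y\in V$ have distinct orbits $Gx\neq Gy$. Since $G$ is compact and acts continuously, both orbits are compact subsets of $V$, and they are disjoint because distinct orbits never meet.

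First I will produce a continuous function that tells the orbits apart. Take $f(v)=\operatorname{dist}(v,Gx)$. It is continuous, vanishes on $Gx$, and is bounded below by some $\delta>0$ on $Gy$, by compactness and disjointness.

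Next I will replace $f$ by a polynomial. Choose a closed ball $B$ containing $Gx\cup Gy$ and centered at the origin; because the inner product is invariant, $B$ is $G$-stable. By Stone--Weierstrass there is a real polynomial $p$ with $\sup_B|p-f|<\delta/3$. Averaging over Haar measure gives
\[
\bar p(v)=\int_G p(g v)\,dg .
\]
The key points to check are as follows.
\begin{itemize}
\item $\bar p$ is again a polynomial of degree at most $\deg p$. This holds because $p\mapsto p\circ g$ preserves the finite dimensional space of polynomials of degree $\le \deg p$, and the integral of a continuous path in that space stays in the space.
\item $\bar p$ is $G$-invariant, by invariance of Haar measure.
\item $\sup_B|\bar p-f|<\delta/3$. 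This uses that $f$ is $G$-invariant and $B$ is $G$-stable, so averaging does not increase the sup distance to $f$ on $B$.
\end{itemize}
It follows that $\bar p(x)<\delta/3$ while $\bar p(y)>2\delta/3$, so $\bar p$ separates the two orbits.

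The only step needing care is the claim that averaging keeps us among polynomials and preserves the approximation. Both follow from the remarks above, so I expect no real obstacle; the argument is essentially Haar averaging combined with Stone--Weierstrass.

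Finally I will deduce Theorem~\ref{thm:unitary-separate}. Regard a complex representation $V$ as a real representation of dimension $2\dim_\C V$. Real polynomials in $\operatorname{Re}x_i$ and $\operatorname{Im}x_i$ are exactly the polynomials in $x_i,\bar x_i$ with real values, so invariant real polynomials are unitary invariants, and the lemma yields separation. For finiteness of $\gamma^U$, the invariant ring of a compact group on a real representation is finitely generated (Hilbert, via the Reynolds operator given by Haar averaging). A finite generating set separates whatever the full ring separates, so $\gamma^U(V,G)$ is at most the maximal degree of these generators.
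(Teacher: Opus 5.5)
Your argument is correct. The paper does not prove this lemma; it only cites Bandeira et al. What you give is the standard proof of this fact: approximate $\operatorname{dist}(\cdot,Gx)$ by Stone--Weierstrass on a $G$-stable ball, then average over Haar measure. Your choice of an invariant inner product is what makes both $\operatorname{dist}(\cdot,Gx)$ and the ball $G$-invariant. Your final appeal to Hilbert's finiteness theorem, via the Reynolds operator, also fills a step the paper's proof of Theorem~\ref{thm:unitary-separate} leaves implicit: why separation by the full invariant ring gives a finite bound on $\gamma^U(V,G)$.
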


\begin{proof}[Proof of Theorem \ref{thm:unitary-separate}]
The G-representation $V=\mathbb{C}^N$ can be viewed as a real representation of twice the dimension with coordinates $x_1,\overline{x_1}, x_2,\overline{x_2}, x_3, \overline{x_3},...,x_n,\overline{x_n}$ and hence the ring of unitary invariants can be viewed as the ring of polynomial invariants on $\mathbb{R}^{2n}$, $\mathbb{C}[x_1,...x_n,\overline{x_1},...,\overline{x_n}]^G=\mathbb{R}[\mathbb{R}^{2n}]^G\otimes_{\mathbb{R}} \mathbb{C}$. Hence we can view any vector as living in the $2n$-dimensional G-representation $v\in \mathbb{R}^{2n}$ and by the prior lemma, the orbit is determined by the polynomial invariants in $\mathbb{R}[\mathbb{R}^{2n}]^G$.
\end{proof}

\section{Cyclic Groups}

We now turn to finite groups, beginning with cyclic groups. Even in this simple setting, one can construct representations for which $\gamma^U(V,G) < \gamma(V,G)<\infty$.

\begin{example}\label{6}
Let $G = \Z_{6}$ with generator $\zeta_6$, a primitive 6th root of unity. Let G act on $V = \C_1 \oplus \C_2$ where the subscript denotes the weight of the character. i.e. $\zeta_6(v_1,v_2)=(\zeta_6v_1,\zeta_6^2v_2)$.

The only degree 3 invariant polynomial is $x_2^3$, so $\gamma(V, \Z_{6}) > 3$. However, the cubic unitary invariant $x_1^2\overline{x_2}$ allows us to solve for the orbit of a generic vector $(v_1,v_2)\in V$ using the following strategy:

Suppose we have two vectors $(v_1,v_2),(w_1,w_2)\in V$ with $$v_2^3=w_2^3, v_1^2\overline{v_2}=w_1^2\overline{w_2}.$$ Suppose further that $v_2\neq 0$.

This forces $w_2=\zeta_3v_2$ where $\zeta_3$ is an arbitrary cube root of unity. This in turn yields $w_1=\pm \zeta_3v_1=\zeta_6v_1$ and hence $(w_1,w_2)$ lies in the orbit of $(v_1,v_2)$.
\end{example}

We now extend this result to the general case of $\mathbb{Z}_{3n}$.
\begin{theorem}\label{thm:z3n}
Let $G = \Z_{3n}$ act on $V = \C_1 \oplus \C_2 \oplus \cdots \oplus \C_{n}$. Then $\gamma^U(V, G) = 3$ and $\gamma(V, G) > 3$.
\end{theorem}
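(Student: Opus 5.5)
The plan is to compute directly which monomials are invariant, and then to give an explicit reconstruction argument for generic vectors. Let $x_1,\dots,x_n$ be the coordinates on $V$, and let $\zeta=\zeta_{3n}$ be the generator, acting on $x_j$ by $\zeta^j$. Since the action is diagonal, a monomial $\prod x_j^{a_j}\prod \bar x_j^{b_j}$ is invariant if and only if $\sum j(a_j-b_j)\equiv 0 \pmod{3n}$. So every question about low-degree invariants reduces to counting weights.

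\emph{Polynomial invariants.} Consider a holomorphic monomial $x_ax_bx_c$ of degree $3$ with $1\le a,b,c\le n$. Its weight sum satisfies $3\le a+b+c\le 3n$, so it is invariant only when $a=b=c=n$. In degree $\le 2$ the weight sum lies in $[1,2n]$ and is never divisible by $3n$, so there are no nonconstant invariants there. Hence the only nonconstant invariant polynomial of degree $\le 3$ is $x_n^3$. It depends on one coordinate, so for $n\ge 2$ it cannot separate generic orbits of $V$: for example, $(v_1,\dots,v_n)$ and $(v_1',v_2,\dots,v_n)$ with $|v_1'|\neq|v_1|$ lie in different orbits. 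This gives $\gamma(V,G)>3$. (For $n=1$ the group is $\Z_3$ acting on $\C_1$, and $x_1^3$ does separate, so I would note that the inequality needs $n\ge 2$.)

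\emph{Lower bound for unitary invariants.} In degree $\le 2$, the unitary invariant monomials are $x_a\bar x_b$ with $a\equiv b \pmod{3n}$, that is, $|x_a|^2$. This holds because $x_ax_b$ and $\bar x_a\bar x_b$ have weight sums in $[2,2n]$. These invariants only see the moduli $|v_j|$, and they cannot distinguish $v$ from $(e^{i\theta}v_1,v_2,\dots,v_n)$ for generic $\theta$. So $\gamma^U(V,G)\ge 3$.

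\emph{Separation in degree $3$.} I would use the invariants
\[
|x_j|^2\ (1\le j\le n),\qquad I_j=x_1x_j\bar x_{j+1}\ (1\le j\le n-1),\qquad x_n^3,
\]
all of which are invariant by the weight count. Take $v$ generic, meaning all $v_j\neq 0$. Since $\bar v_1=|v_1|^2/v_1$, the values of the invariants give
\[
v_{j+1}=\frac{\overline{I_j(v)}\,v_1v_j}{|v_1|^2|v_j|^2}.
\]
So $v_{j+1}$ is a constant determined by the invariants times $v_1v_j$. Inducting on $j$, we get $v_j=c_jv_1^j$, where the constants $c_j\neq 0$ are determined by the invariant values. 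Now suppose $w$ has the same invariant values. Then $w_j=c_jw_1^j$ with the same $c_j$, and
\[
c_n^3w_1^{3n}=w_n^3=v_n^3=c_n^3v_1^{3n}.
\]
Hence $w_1=\zeta^kv_1$ for some $k$, and then $w_j=c_j\zeta^{jk}v_1^j=\zeta^{jk}v_j$ for every $j$. That is, $w=\zeta^k\cdot v$, so $w$ lies in the orbit of $v$.

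The main point needing care is showing that the chain invariants $I_j$ pin every coordinate to a power of $v_1$, so that the single remaining ambiguity (a $3n$-th root of unity in $v_1$) is exactly the group action. The computation above handles this, and the genericity condition needed is just that no coordinate of $v$ vanishes.
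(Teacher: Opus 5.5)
Your proof is correct and follows the paper's argument: the same chain of cubic invariants $|x_1|^2$, $x_1x_j\bar x_{j+1}$, $x_n^3$ recursively determines each coordinate from $v_1$, so any $w$ with the same values is $(\theta v_1,\theta^2 v_2,\dots,\theta^n v_n)$ for some $\theta$, and then $x_n^3$ forces $\theta^{3n}=1$. You also prove $\gamma^U(V,G)\ge 3$ explicitly and note that $\gamma(V,G)>3$ fails for $n=1$; the paper's proof omits both points, and both are worth keeping.
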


\begin{proof}
The only polynomial invariant of degree $\leq 3$ is $x_{n}^3$, so $\gamma(V, G) > 3$. For the unitary bound, We use the following equations

$$S=\{x_1\overline{x_1}=r,x_1^2\overline{x_2}=a_1,x_1x_2\overline{x_3}=a_2, x_1x_3\overline{x_4}=a_3, ...,x_1x_{n-1}\overline{x_n}=a_{n-1},x_n^3=a_n\}.$$ Using the first equation $x_1\overline{x_1}=|x_1|=r$ we can assume without loss of generality that $x_1$ is fixed and has unit norm. 

Now note that if $x_1\neq 0$ then $x_2$ is uniquely determined by $x_1^2\overline{x_2}=a_1$. Going a step further, if $x_1,x_2\neq 0$ then $x_3$ is uniquely determined by $x_1x_2\overline{x_3}=a_2$. Continuing on in this way, fixing $x_1\in S^1$, the list of unitary invariant equations in S determine a vector $(v_1,...,v_n)\in V$ if $v_k\neq 0$ $\forall 1\leq k\leq n$. In other words, the invariants in S determine a generic $S^1$ orbit of V. 

More precisely, let $v=(v_1,...,v_n)\in V$ with $v_k\neq 0$ $\forall 1\leq k\leq n$ and suppose $w=(w_1,...,w_n)\in V$ satisfies $f(w)=f(v)$ $\forall f\in S$. We must have $w=(\theta v_1,\theta^2v_2,...,\theta^nv_n)$ with $|\theta|=1$. Now using the fact $w_n^3=v_n^3$ implies $(\theta^n v_n)^3=v_n$ which yields $\theta^{3n}=1$ and hence $w$ is in the $\mathbb{Z}_{3n}$ orbit of $v$.
\end{proof}

When the representation contains every character of $\mathbb{Z}_n$, the gap disappears.

\begin{theorem}[\cite{bandeira2023estimation}]\label{thm:regular}
Let $V = \C^n$ be the regular representation of $\Z_n$. Then
\[
\gamma(\C^n, \Z_n) = \gamma^U(\C^n, \Z_n) = 3.
\]
\end{theorem}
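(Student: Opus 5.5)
Since every polynomial invariant is in particular a unitary invariant, a polynomial separating set of degree $\leq d$ is also a unitary one, so $\gamma^U(\C^n,\Z_n)\leq\gamma(\C^n,\Z_n)$. The plan is therefore to show two things: $\gamma(\C^n,\Z_n)\leq 3$ via explicit cubic polynomial invariants, and $\gamma^U(\C^n,\Z_n)\geq 3$ by ruling out degree $\leq 2$ unitary invariants.

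\textbf{Upper bound.} Diagonalize the regular representation by the discrete Fourier transform. In coordinates $x_0,\dots,x_{n-1}$, the generator $\zeta$ then acts by $x_k\mapsto\zeta^k x_k$. The polynomial invariants of degree $\leq 3$ include $x_0$ and, for all $i,j,k$ with $i+j+k\equiv 0 \pmod n$, the monomials
\[
x_ix_{n-i}, \qquad x_ix_jx_k .
\]
Let $v$ be generic, with all $v_k\neq 0$, and suppose $w$ has the same values on these invariants. Set $\theta=w_1/v_1$; I aim to show $\theta$ is an $n$-th root of unity and $w_k=\theta^k v_k$.
\begin{enumerate}
\item The equation $x_1x_{n-1}$ gives $w_{n-1}=\theta^{-1}v_{n-1}$.
\item The equation $x_1^2x_{n-2}$ gives $w_{n-2}=\theta^{-2}v_{n-2}$.
\item Inductively, $x_1x_{k}x_{n-k-1}$ yields $w_{n-k-1}=\theta^{-(k+1)}v_{n-k-1}$, provided $w_k$ is already known.
\end{enumerate}
To run this recursion cleanly, pair the $x_1x_{n-1}$ relation with the cubic $x_1x_{k-1}x_{n-k}$. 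This determines $w_k/w_{k-1}$-type ratios, giving $w_j=\theta^{-j'}v_j$ consistently for all indices. Finally, $x_1^n$ is not available in degree 3. Instead, closing the chain around the cycle forces $\theta^n=1$: the relation $x_1 x_{n-1}$ together with $w_{n-1}=\theta^{n-1}v_{n-1}$, read off from the forward recursion, gives $\theta^n=1$. So $w=\zeta^m\cdot v$.

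\textbf{Lower bound.} Degree-$\leq 2$ unitary invariants are spanned by the constants, $x_0$ and $\bar x_0$, the terms $x_kx_{n-k}$ and their conjugates, and the Hermitian terms $x_k\bar x_k$. The nonlinear ones in this list are also invariant under the larger torus action $x_k\mapsto t_kx_k$ with $|t_k|=1$ and $t_kt_{n-k}=1$. This torus has positive dimension once $n\geq 3$, so its generic orbits are infinite. They therefore cannot all be $\Z_n$-orbits, and these invariants fail to separate; hence $\gamma^U\geq 3$. For $n\leq 2$ the statement needs separate checking, or is read with the paper's conventions.

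\textbf{Main obstacle.} The delicate step is the bookkeeping of the recursion, in particular making sure the phases propagate consistently and that the cycle closes to force $\theta^n=1$. For this I would follow the standard bispectrum argument of \cite{bandeira2023estimation}, viewing the $x_1x_kx_{n-k-1}$ as the bispectrum row $B(1,k)$.
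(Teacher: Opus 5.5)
Your proof is correct in substance. It is the standard bispectrum recursion in the Fourier basis, which is what the paper relies on when it cites \cite{bandeira2023estimation}. The paper gives no argument of its own, and in particular none for the lower bound, so your torus argument combined with $3\le\gamma^U\le\gamma\le 3$ is a genuine addition.

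Three points need repair.

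\textbf{The ratio step.} As written it does not work: pairing $x_1x_{n-1}$ with $x_1x_{k-1}x_{n-k}$ does not isolate $w_k/w_{k-1}$. You must divide the cubic $x_1x_{k-1}x_{n-k}$ by the quadratic $x_kx_{n-k}$. The divisor is nonzero because $w_kw_{n-k}=v_kv_{n-k}\neq 0$. This gives $w_1w_{k-1}/w_k=v_1v_{k-1}/v_k$, and hence by induction $w_k=\theta^k v_k$ for $1\le k\le n-1$. Then $w_1w_{n-1}=v_1v_{n-1}$ forces $\theta^n=1$, and the invariant $x_0$ gives $w_0=v_0$.

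\textbf{The linear invariants.} In the lower bound, take $t_0=1$, so that $x_0$ and $\bar x_0$ are also preserved by the torus and not only the nonlinear invariants.

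\textbf{Small $n$.} For $n\le 2$ the statement is false, not merely in need of checking. For $\Z_2$ the invariants $x_0$ and $x_1^2$ separate all orbits, so $\gamma=\gamma^U=2$; for $n=1$, $\gamma=\gamma^U=1$. The theorem therefore needs the hypothesis $n\ge 3$, which is exactly where your torus has positive dimension $\lfloor (n-1)/2\rfloor$.
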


In the discrete Fourier basis, the degree-3 polynomial invariants are $$\{\hat{x}[i]\hat{x}[j]\hat{x}[N-i-j]\}_{0 \leq i,j \leq N-1}$$ and the unitary invariants include expressions like $$\{\hat{x}[i]\hat{x}[j]\overline{\hat{x}[i+j]}\}_{0 \leq i,j \leq N-1}.$$ The latter is the discrete analogue of the bispectrum.

This result has been extensively studied in the signal processing literature \cite{bendory2017bispectrum,chen2018spectral}.

The following generalization appears in \cite{edidin2025orbit}.

\begin{theorem}[\cite{edidin2025orbit}]\label{thm:contains-regular}
Let $V$ be a representation of a finite group over $\C$ which contains the regular representation. Then $\gamma(V, G) = \gamma^U(V, G) = 3$.
\end{theorem}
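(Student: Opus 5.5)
The plan is to split the statement into a lower bound and an upper bound. For the lower bound, I need $\gamma^U(V,G)\ge 3$, and it suffices to show that invariants of degree at most $2$, polynomial or unitary, cannot separate generic orbits of a nontrivial finite group $G$. Write $V = R\oplus W$ with $R$ the regular representation. A degree-$\le 2$ unitary invariant restricted to $R$ is built from the $G$-invariant parts of $\C[R]_1$, $\C[R]_2$, $\overline{\C[R]}_1$, $\overline{\C[R]}_2$ and $\C[R]_1\otimes\overline{\C[R]}_1$. In the Fourier/isotypic basis, the quadratic Hermitian invariants only see the norms of the projections onto the isotypic components. For abelian $G$ these are the power spectrum $|\hat x[\chi]|^2$, together with the products $\hat x[\chi]\hat x[\chi^{-1}]$. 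These functions cannot detect the relative phases between different isotypic components.

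Concretely, I would exhibit, for a generic $x$, a continuous family $x_t$ that has the same degree-$\le 2$ invariants but meets infinitely many orbits. Modulating phases of the nontrivial isotypic pieces suitably, for instance $x_t = x_{\mathrm{triv}} + \sum_\chi (e^{it}P_\chi + e^{-it}P_{\chi^{-1}})x$ in the abelian case, preserves all such invariants. Since orbits are finite, a one-parameter family cannot lie in finitely many orbits. For nonabelian $G$ the analogous deformation acts by $U(m_\pi)$ on multiplicity spaces while commuting with $G$; such unitaries preserve Hermitian quadratic invariants. The dimension count $\dim V > \dim\{\text{quadratic invariants}\}$ handles the rest. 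The cleanest general route is a dimension count: the real map given by degree-$\le 2$ invariants factors through the orbit space of the positive-dimensional group $U_G(V)$ (or its analogue preserving bilinear forms), and generic fibers of that map are positive dimensional.

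For the upper bound, $\gamma^U(V,G)\le\gamma(V,G)$ holds since polynomial invariants are unitary invariants. The main content is therefore $\gamma(V,G)\le 3$. Since this is a cited theorem \cite{edidin2025orbit}, I would sketch the approach rather than reprove it. Fix a copy of $R\subset V$, identified with functions $G\to\C$. The cubic invariants $x\mapsto \sum_g x(g)y(gh)z(gk)$, obtained by symmetrizing, determine the generic element of $R$ up to $G$: for abelian $G$ this is the bispectrum argument of Theorem \ref{thm:regular}. For nonabelian $G$ one uses the noncommutative bispectrum together with Tannaka–Krein-type recovery, as in Kakarala's theorem, which gives a unique $G$-orbit for generic $x\in R$. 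Next, for the complementary summand $W$, I would use mixed cubic invariants $\sum_g \langle x(g)\,\cdot\,\rangle$ that are linear in $w\in W$ and quadratic in $x\in R$. Once $x$ is fixed with trivial stabilizer (a generic condition), these give linear equations in $w$. Their genericity follows from the fact that the $G$-equivariant map $R\otimes R\to W^*$ contains all of $W^*$, because $R\otimes R$ contains $R$, which contains every irreducible.

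The main obstacle is the nonabelian recovery from the bispectrum, which is the substance of the cited work. I also expect the lower bound for arbitrary $G$ to need care: one has to produce a genuine positive-dimensional fiber of the degree-$2$ invariant map.
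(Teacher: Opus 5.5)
The paper gives no argument for this statement; it only cites \cite{edidin2025orbit} for the fact that invariants of degree $\le 3$ separate generic orbits. So deferring the upper bound matches the paper, and what you actually have to supply is the lower bound. That is where the proposal has a genuine gap.

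\textbf{The lower bound fails for small groups.} Your claim that degree-$\le 2$ invariants cannot separate generic orbits of any nontrivial finite group is false. Take $G=\Z_2$ and $V=R=\C_0\oplus\C_1$ with coordinates $(a,b)$. The invariants $a$ and $b^2$ separate every orbit $\{(a,\pm b)\}$. For $V\supseteq R$, the products $b_ib_j$ on the sign-isotypic block recover it up to sign. Hence $\gamma=\gamma^U=2$, and the statement needs $|G|\ge 3$.

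\textbf{Your mechanism also cannot handle elementary abelian $2$-groups.} There every character is self-dual, so $e^{it}P_\chi+e^{-it}P_{\chi^{-1}}=2\cos t\,P_\chi$, which changes $|\hat x[\chi]|^2$. In fact the degree-$\le 2$ invariants ($v_\chi v_\chi^{T}$ and $v_\chi v_\chi^{*}$ on each isotypic block $v_\chi$) determine each block up to sign. The fibers are therefore finite and no dimension count is available. For $G=(\Z_2)^k$ with $k\ge 2$ you need a count instead: a generic fiber has $2^{2^k-1}$ points, while an orbit has only $2^k$.

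\textbf{The nonabelian symmetry is misidentified.} Write the $\pi$-isotypic block as $\pi\otimes\C^{m}$ and $x=\sum_i v_i\otimes e_i$. The Hermitian quadratic invariants record the full Gram matrix $\Gamma=(\langle v_i,v_j\rangle)$. The map $I\otimes u$ with $u\in U(m)$ replaces it by $\bar u\,\Gamma\,u^{T}$ (inner product conjugate-linear in the first slot). So the commutant does not preserve these invariants once $m\ge 2$, and the invariant map does not factor through $U_G(V)$. Already for trivial $G$, $U(V)$ does not fix $|x_1|^2$.

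What works is acting on the irreducible factor:
\begin{itemize}
\item $A\otimes I$ with $A\in O(d_\pi)$ for $\pi$ of real type;
\item $A\otimes I$ with $A\in Sp(d_\pi/2)$ for $\pi$ of quaternionic type;
\item $A$ on $V_\pi$ together with $\bar A$ on $V_{\pi^*}$, $A\in U(d_\pi)$, for a non-self-dual pair.
\end{itemize}
These preserve the Gram matrices and the invariant bilinear pairings. They need not commute with $G$, since you only need a positive-dimensional subset of the fiber. They act freely at generic $x$ because $m_\pi\ge d_\pi$ lets the $v_i$ span $\pi$. Such a positive-dimensional group exists if and only if $G$ is not an elementary abelian $2$-group. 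Combined with the count above, this gives the lower bound exactly when $|G|\ge 3$.

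\textbf{Two smaller points on your upper-bound sketch.}
\begin{itemize}
\item Kakarala's theorem concerns the conjugated triple correlation. As cited, it yields $\gamma^U(R,G)\le 3$, not the polynomial bound.
\item For the summand $W$, knowing that $R\otimes R$ contains every irreducible does not show that the values at one fixed $x$ span $W^*$. The clean argument is that $\mathrm{Hom}_G(R,W^*)\cong W^*$ via $\alpha\mapsto\alpha(e)$. Then $\{\alpha(x)\}=\rho_{W^*}(x)W^*$ with $\rho_{W^*}(x)=\sum_g x(g)\rho_{W^*}(g)$, which is all of $W^*$ whenever $\det\rho_{W^*}(x)\ne 0$. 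This already works in degree $2$.
\end{itemize}
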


\section{The Heisenberg Group}

Let $\zeta = e^{2\pi i/N}$ and define the finite Heisenberg group
\[
\HN = \{(k, n, m) : k, n, m \in \Z_N\}
\]
with multiplication
\[
(k, n, m) \cdot (k', n', m') = (k + k', n + n', m + m' + kn').
\]
The group acts on $\C^N$ via:
\begin{itemize}
\item Time shifts: $T_k x_j = x_{j+k}$
\item Frequency modulations: $M_n x_j = \zeta^{nj} x_j$
\item Global phase: $Z_m x_j = \zeta^m x_j$
\end{itemize}
These operators satisfy the Heisenberg commutation relation $T_1 M_1 = \zeta M_1 T_1$.

This is the discrete analogue of the shift--modulation (Gabor) action underlying radar, sonar, and time--frequency analysis. The Heisenberg action arises naturally in joint time-delay and Doppler-shift estimation. For a good source on the utility of the finite Heisenberg group in some of these applications see \cite{howard2006finite, howard2006golay}.

\subsection{Absence of low-degree polynomial invariants}
Unlike for the standard representation of the cyclic group, the Heisenberg group has no low degree polynomial invariants.
\begin{proposition}
\label{thm:no-low-degree}
$\C[x_0, \ldots, x_{N-1}]^{\HN}$ contains no nonconstant invariants of degree $< N$.
\end{proposition}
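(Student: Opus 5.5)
The key observation is that the central subgroup $\{Z_m\}$ already forces every invariant to have degree divisible by $N$. Since $Z_1$ acts on $\C^N$ as the scalar $\zeta$, it acts on a homogeneous polynomial $f$ of degree $d$ by $(Z_1 f)(x) = f(\zeta x) = \zeta^{d} f(x)$. This holds whether one uses the action on points or the contragredient action on coordinate functions; in the latter case the factor is $\zeta^{-d}$, which changes nothing.

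The plan is as follows.

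First, note that each element of $\HN$ acts linearly on $\C^N$. Hence the action on $\C[x_0,\ldots,x_{N-1}]$ preserves degree, and a polynomial $f$ is invariant if and only if each of its homogeneous components is invariant. It therefore suffices to treat a nonzero homogeneous invariant $f$ of degree $d$ with $1 \le d < N$.

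Second, apply the central element $Z_1=(0,0,1)$. Invariance gives $f = \zeta^{\pm d} f$. Since $f\neq 0$, this forces $\zeta^d = 1$, so $N \mid d$. This contradicts $1 \le d < N$.

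Hence there are no nonconstant invariants of degree less than $N$. In fact the invariant ring is concentrated in degrees divisible by $N$.

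There is essentially no obstacle. The only point needing care is to confirm that the global phase $Z_m$ really lies in the image of $\HN$ in $GL_N(\C)$. This is true by construction, and it is also forced by the commutation relation: the commutator $T_1M_1T_1^{-1}M_1^{-1}$ equals multiplication by $\zeta^{\pm1}$. So even if one only took the group generated by shifts and modulations, the scalar $\zeta$ would be present.

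If one wanted to add a remark on sharpness, the natural candidate for a degree-$N$ invariant is the product $x_0x_1\cdots x_{N-1}$. It is invariant under shifts. Under $M_1$ it picks up the factor $\zeta^{\sum_j j}=\zeta^{N(N-1)/2}$, which equals $1$ for odd $N$ and $-1$ for even $N$. Under $Z_1$ it picks up $\zeta^N = 1$. So for odd $N$ it shows the bound in Proposition~\ref{thm:no-low-degree} is attained. That remark is not needed for the proposition itself.
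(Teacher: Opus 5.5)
Your proof is correct and is essentially the paper's argument: the paper also uses only the global phase $Z_m$, observing that a monomial $x_0^{a_0}\cdots x_{N-1}^{a_{N-1}}$ is $Z_m$-invariant only when $a_0+\cdots+a_{N-1}\equiv 0 \pmod N$, which is the monomial-level version of your homogeneous-component argument. Your optional sharpness remark is also right, and the paper's $I_N=\sum_j x_j^N$ supplies a degree-$N$ invariant for every $N$, not just odd $N$.
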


\begin{proof}
The global phase operator $Z_m$ acts by $Z_m x_j = \zeta^m x_j$, multiplying each coordinate by $\zeta^m$. For a monomial $x_0^{a_0} \cdots x_{N-1}^{a_{N-1}}$ to be invariant under $Z_m$ for all $m \in \Z_N$, we need
\[
\zeta^{m(a_0 + \cdots + a_{N-1})} = 1 \quad \text{for all } m \in \Z_N.
\]
This requires $a_0 + \cdots + a_{N-1} \equiv 0 \pmod{N}$, so any invariant polynomial must have total degree divisible by $N$. In particular, there are no nonconstant invariants of degree $< N$.
\end{proof}

Moreover, the invariants that do appear in degree $\geq N$ are certainly not monomials and hence extracting a set of invariants which separate generic orbits is extremely challenging.

\section{The Heisenberg Bispectrum}

The key insight of this paper is that although the Heisenberg group does not admit low-degree polynomial invariants, its action on the \emph{magnitudes}
\[
y_j = |x_j|^2=x_j\overline{x_j}, \qquad z_k = |\hat{x}[k]|^2=\hat{x}[k]\overline{\hat{x}[k]}
\]
is much simpler: time shifts cyclically permute the $y_j$, and frequency modulations cyclically permute the $z_k$. Consequently, the bispectra of the real sequences $y,z\in \mathbb{R}^N$ are invariant under $\HN$.

Let $\hat{y}$ and $\hat{z}$ denote the discrete Fourier transforms of the real vectors $y$ and $z$.

\begin{definition}[Heisenberg bispectrum]\label{def:heisenberg-bispectrum}
For $x \in \C^N$, define the \emph{modulus bispectrum}
\[
B^M(x)(i,j) = \hat{y}[i]\hat{y}[j]\hat{y}[N - i - j]
\]
where $\hat{y}$ is the DFT of $y = (|x_0|^2, \ldots, |x_{N-1}|^2)\in \mathbb{R}^N$. Similarly, define the \emph{Fourier-modulus bispectrum}
\[
B^{FM}(x)(i,j) = \hat{z}[i]\hat{z}[j]\hat{z}[N - i - j]
\]
where $\hat{z}$ is the DFT of $z = (|\hat{x}[0]|^2, \ldots, |\hat{x}[N-1]|^2)\in \mathbb{R}^N$. The \emph{Heisenberg bispectrum} is the pair
\[
B^H(x) = (B^M(x), B^{FM}(x)).
\]
\end{definition}

\begin{remark}
The individual entries $B^M(x)(i,j)$ and $B^{FM}(x)(i,j)$ are unitary invariants of degree 3 in the real variables $y_0, \ldots, y_{N-1}$ and $z_0, \ldots, z_{N-1}$ respectively, and hence of degree 6 in the original variables $x_j, \bar{x}_j$.
\end{remark}

\begin{theorem}[Invariance]\label{thm:invariance}
For every $g \in \HN$ and $x \in \C^N$, we have $B^H(g \cdot x) = B^H(x)$.
\end{theorem}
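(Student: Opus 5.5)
Since $\HN$ is generated by $T_1$, $M_1$ and $Z_1$, and $B^H$ is a function of $x$ alone, it suffices to check that $B^M$ and $B^{FM}$ are each unchanged under these three generators. If $B^H(T_1x)=B^H(M_1x)=B^H(Z_1x)=B^H(x)$ for all $x$, invariance under arbitrary products follows by induction on word length.

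The plan is to compute how each generator acts on the magnitude vectors $y$ and $z$, and then use the translation invariance of the ordinary bispectrum on $\R^N$.

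\textbf{Action on $y$.} Under $Z_m$ we have $|\zeta^m x_j|^2=|x_j|^2$, so $y$ is unchanged. Under $M_n$ we have $|\zeta^{nj}x_j|^2=|x_j|^2$, so $y$ is again unchanged. Under $T_k$, $y_j\mapsto y_{j+k}$ is a cyclic shift.

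\textbf{Action on $z$.} Take the DFT convention $\hat{x}[\ell]=\sum_j x_j\zeta^{-j\ell}$.
\begin{itemize}
\item $Z_m$ multiplies $\hat{x}$ by $\zeta^m$, so $z$ is unchanged.
\item $T_k$ gives $\widehat{T_kx}[\ell]=\zeta^{k\ell}\hat{x}[\ell]$, a pure phase, so $z$ is unchanged.
\item $M_n$ gives $\widehat{M_nx}[\ell]=\hat{x}[\ell-n]$, so $z$ is cyclically shifted: $z_\ell\mapsto z_{\ell-n}$.
\end{itemize}

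\textbf{Shift invariance.} It remains to show that for a real (or any) vector $u\in\C^N$ with cyclic shift $u'_j=u_{j+k}$, the triple products $\hat{u}[i]\hat{u}[j]\hat{u}[N-i-j]$ are unchanged. We have $\hat{u'}[\ell]=\zeta^{k\ell}\hat{u}[\ell]$, so the triple product acquires the factor
\[
\zeta^{k(i+j+(N-i-j))}=\zeta^{kN}=1 .
\]
This is exactly the degree-3 invariance underlying Theorem~\ref{thm:regular}, which we may also cite directly. Applying it to $u=y$ handles $B^M$ under $T_k$, and applying it to $u=z$ handles $B^{FM}$ under $M_n$. The remaining cases are trivial because $y$ or $z$ is literally unchanged.

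The main point requiring care is the frequency-side computation: showing that modulation acts on $\hat{x}$ as a cyclic shift, and that the time shift acts only by phases. The sign of the shift depends on the DFT convention, but this is irrelevant since any cyclic shift preserves the bispectrum. The argument does not depend on the non-commutativity of $\HN$, because we only verify invariance on generators.
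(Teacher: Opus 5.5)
Your proof is correct and follows the same route as the paper. Both check the generators $T_k$, $M_n$, $Z_m$ and observe that each either fixes or cyclically shifts $y$ and $z$ ($T_k$ acts on $\hat{x}$ only by phases, while $M_n$ shifts $\hat{x}$). Both then conclude by the shift invariance of the ordinary bispectrum. Your explicit computations of $\widehat{T_kx}$ and $\widehat{M_nx}$ under a fixed DFT convention, and of the cancelling factor $\zeta^{kN}=1$, make precise what the paper states in words.
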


\begin{proof}
We verify invariance under each generator of $\HN$:

\emph{Time shift $T_k$}: This cyclically permutes $(|x_0|, \ldots, |x_{N-1}|)$, so $y$ is permuted. The bispectrum of a real sequence is shift-invariant (since a cyclic shift multiplies Fourier coefficients by phases, which cancel in the bispectrum). The Fourier magnitudes $z$ are unchanged by $T_k$ since $|\widehat{T_k x}[j]| = |\hat{x}[j]|$.

\emph{Frequency modulation $M_n$}: This multiplies $x_j$ by $\zeta^{nj}$, leaving $|x_j|$ unchanged, so $y$ is unchanged. The Fourier transform satisfies $\widehat{M_n x}[k] = \hat{x}[k-n]$, so $z$ is cyclically permuted. Again, the bispectrum is shift-invariant.

\emph{Global phase $Z_m$}: This multiplies all coordinates by $\zeta^m$, leaving all magnitudes unchanged.
\end{proof}

\section{Separation of Heisenberg Orbits}

\subsection{Cyclic bispectrum theory}

We first recall the classical result for the cyclic group action.

\begin{theorem}[Cyclic bispectrum separation {\cite{bendory2017bispectrum, sadler1989shift}}]\label{thm:cyclic-bispectrum}
Let $y \in \R^N$ be a real signal with nonvanishing Fourier coefficients $\hat{y}[k] \neq 0$ for all $0 \leq k \leq N-1$. The bispectrum
\[
B(y)(i,j) = \hat{y}[i]\hat{y}[j]\hat{y}[N-i-j]
\]
determines $y$ up to a cyclic shift.
\end{theorem}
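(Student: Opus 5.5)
The plan is to recover the Fourier coefficients $\hat{y}[k]$ from the bispectrum up to a character $k \mapsto \omega^{k\ell}$ with $\omega = e^{2\pi i/N}$. Such a twist is exactly the effect of a cyclic shift, so this gives the claim. Throughout, we use that $y$ is real, so $\hat{y}[N-k] = \overline{\hat{y}[k]}$.

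First, set $i=j=0$. Then $B(y)(0,0) = \hat{y}[0]^3$. Since $\hat{y}[0] = \sum_j y_j$ is real, it is determined as the real cube root, and it is nonzero by hypothesis.

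Next, set $j=0$. Then
\[
B(y)(i,0) = \hat{y}[i]\,\hat{y}[0]\,\hat{y}[N-i] = \hat{y}[0]\,|\hat{y}[i]|^2,
\]
so all moduli $|\hat{y}[i]|$ are determined. It remains to recover the phases $\phi_k = \arg \hat{y}[k]$.

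For the phases, take $j=1$. Then
\[
B(y)(i,1) = \hat{y}[i]\,\hat{y}[1]\,\overline{\hat{y}[i+1]},
\]
so $B(y)(i,1)$ determines $\phi_{i+1} - \phi_i - \phi_1$ modulo $2\pi$, because every factor is nonzero. Treat $\phi_1$ as a free parameter $\alpha$. Induction on $i$ then gives $\phi_k = k\alpha + c_k$, where the $c_k$ are determined by the data; here we use $\phi_0 = 0$ or $\pi$, which is fixed by the sign of $\hat{y}[0]$.

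The one nontrivial step is closing the recursion at $i = N-1$. There we get $\phi_N = \phi_0$, which forces $N\alpha \equiv c$ for a known constant $c$. Hence $\alpha$ is determined up to adding a multiple of $2\pi/N$. Each admissible choice $\alpha' = \alpha + 2\pi\ell/N$ multiplies $\hat{y}[k]$ by $\omega^{k\ell}$, which is precisely the Fourier transform of a cyclic shift of $y$ by $\ell$.

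Finally, inverting the DFT recovers $y$ up to a cyclic shift. Conversely, shifts preserve $B$, as in the proof of Theorem~\ref{thm:invariance}, so the bispectrum determines exactly the shift orbit of $y$.

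I expect the main obstacle to be the bookkeeping that ensures every candidate solution has this form. Concretely, I would argue directly with two signals: suppose $y$ and $y'$ have equal bispectra. Define $u_k = \hat{y}'[k]/\hat{y}[k]$. Then $u_0 = 1$, $|u_k| = 1$, and
\[
u_i\, u_j\, \overline{u_{i+j}} = 1 \quad \text{for all } i,j.
\]
So $u$ is a character of $\Z_N$, hence $u_k = \omega^{k\ell}$ for some $\ell$. This formulation avoids tracking branches of the argument and finishes the proof cleanly.
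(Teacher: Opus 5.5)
Your proof is correct. The paper gives no proof of its own here; it only cites \cite{bendory2017bispectrum, sadler1989shift}.

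Your argument is essentially the classical phase-recursion (frequency-marching) argument behind those references:
\begin{itemize}
\item the row $j=0$ yields $\hat{y}[0]$ and the moduli $|\hat{y}[k]|$;
\item the row $j=1$ propagates phases from the free parameter $\phi_1$;
\item the wrap-around equation at $i=N-1$ pins $\phi_1$ down modulo $2\pi/N$, which is exactly the shift ambiguity.
\end{itemize}

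Your ratio formulation is the cleanest way to package this. From $u_0^3=1$ with $u_0$ real, $u_iu_0u_{N-i}=1$, and $u_{N-k}=\overline{u_k}$, you get $|u_k|=1$ and $u_{i+j}=u_iu_j$, so $u$ is a character of $\Z_N$. The identities $u_0\in\R$ and $u_{N-k}=\overline{u_k}$ use that $y'$ is also real, which is implicit in the statement.

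What your self-contained version buys over the bare citation is visibility of exactly what is used. First, the nonvanishing hypothesis is needed only for $y$, not for the competitor $y'$, because $|\hat{y}'[k]|=|\hat{y}[k]|\neq 0$ is derived. Second, only the rows $j=0,1$ of the bispectrum enter. The first point is precisely the form in which the result is invoked in Theorem~\ref{thm:separation-shift}, where $x'$ is arbitrary and only $x$ is assumed generic.
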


\subsection{Recovery of magnitude data}

\begin{lemma}\label{lem:generic-nonvanishing}
For generic $x \in \C^N$, both $y = (|x_0|^2, \ldots, |x_{N-1}|^2)$ and $z = (|\hat{x}[0]|^2, \ldots, |\hat{x}[N-1]|^2)$ have all Fourier coefficients nonvanishing.
\end{lemma}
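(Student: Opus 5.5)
The plan is to show that each condition $\hat{y}[k]\neq 0$ and $\hat{z}[k]\neq 0$, for $0\le k\le N-1$, fails only on a proper real algebraic subset of $\C^N\cong\R^{2N}$. A finite union of proper real algebraic subsets is again a proper real algebraic subset of dimension $<2N$, so its complement $U$ is open, dense and of full measure, which is exactly genericity in the sense of the Remark.

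First observe that each $\hat{y}[k]=\sum_j |x_j|^2\zeta^{-jk}$ is a polynomial in the real and imaginary parts of the $x_j$. The same holds for $\hat{z}[k]=\sum_j |\hat{x}[j]|^2\zeta^{-jk}$, since $\hat{x}$ is a linear function of $x$. Because these values are complex, the vanishing locus of $\hat{y}[k]$ is the common zero set of the two real polynomials $\operatorname{Re}\hat{y}[k]$ and $\operatorname{Im}\hat{y}[k]$, and likewise for $\hat{z}[k]$. Hence $\{\hat{y}[k]=0\}$ is contained in the real algebraic set $\{\operatorname{Re}\hat{y}[k]=0\}$, which is proper as soon as $\operatorname{Re}\hat{y}[k]$ is not identically zero.

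The main step, and the only one with content, is to show that each of these $2N$ functions is not identically zero. For this I exhibit a single vector at which it is nonzero.

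\begin{itemize}
\item For $\hat{y}$, take $x=e_0$. Then $y=e_0$, so $\hat{y}[k]=1$ for every $k$.
\item For $\hat{z}$, use the fact that the DFT is invertible. Choose $x$ with $\hat{x}=e_0$, i.e.\ $x$ equal to a constant vector suitably normalized. Then $z=e_0$ and $\hat{z}[k]=1$ for every $k$.
\end{itemize}

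So at these points $\operatorname{Re}\hat{y}[k]=1\neq 0$ and $\operatorname{Re}\hat{z}[k]=1\neq 0$, and none of these polynomials vanishes identically.

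To finish, use that a nonzero real polynomial on $\R^{2N}$ has zero set of dimension at most $2N-1$. Then
\[
\bigcup_k\{\operatorname{Re}\hat{y}[k]=0\}\ \cup\ \bigcup_k\{\operatorname{Re}\hat{z}[k]=0\}
\]
is a real algebraic set of dimension $<2N$. Its complement $U$ has both $y$ and $z$ with all Fourier coefficients nonvanishing, which proves the lemma.

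The obstacle is mild: one must remember that $\hat{y}[k]$ is not a holomorphic polynomial in $x$. The argument therefore has to be run in real coordinates, with witness points rather than an appeal to irreducibility. The witnesses $e_0$ and the constant vector dispose of this.
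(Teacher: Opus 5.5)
Your proof is correct and follows the same route as the paper, whose one-line proof just asserts that each condition $\hat{y}[k]=0$ and $\hat{z}[k]=0$ cuts out a proper real algebraic subset of $\C^N$. Your version is more complete: the witness points $e_0$ and the normalized constant vector supply the check, omitted in the paper, that these polynomials are not identically zero. Bounding by the single real equation $\mathrm{Re}\,\hat{y}[k]=0$ also tidies up the paper's loose description of the complex condition $\hat{y}[k]=0$ as a hypersurface.
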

Note that throughout this work, ``generic'' means outside a fixed proper real algebraic subset of $\C^N$.
\begin{proof}
The condition $\hat{y}[k] = 0$ defines a real algebraic hypersurface in the space of signals. Similarly for $\hat{z}[k] = 0$.
\end{proof}

\begin{theorem}[Separation up to shift]\label{thm:separation-shift}
For generic $x \in \C^N$, the Heisenberg bispectrum $B^H(x)$ determines:
\begin{enumerate}
\item the magnitude vector $y_j = |x_j|$ up to a cyclic shift, and
\item the Fourier-magnitude vector $z_k = |\hat{x}[k]|$ up to a cyclic shift.
\end{enumerate}
\end{theorem}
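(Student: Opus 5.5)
The plan is to reduce both parts of the statement to the cyclic bispectrum result, Theorem \ref{thm:cyclic-bispectrum}, applied separately to the two real sequences $y$ and $z$. By Definition \ref{def:heisenberg-bispectrum}, the first component $B^M(x)$ is exactly the classical bispectrum $B(y)$ of the real sequence $y=(|x_0|^2,\ldots,|x_{N-1}|^2)\in\R^N$. Likewise, $B^{FM}(x)=B(z)$ with $z=(|\hat{x}[0]|^2,\ldots,|\hat{x}[N-1]|^2)\in\R^N$. So the statement amounts to checking that the hypotheses of Theorem \ref{thm:cyclic-bispectrum} hold for both $y$ and $z$ on a common generic set, and that recovering the squared magnitudes is the same as recovering the magnitudes.

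First, I fix the generic set. By Lemma \ref{lem:generic-nonvanishing}, there is a proper real algebraic subset $W\subset\C^N$ outside of which $\hat{y}[k]\neq 0$ and $\hat{z}[k]\neq 0$ for every $k$. To justify that $W$ is indeed proper, I would note the following. The functions $x\mapsto \hat{y}[k]$ and $x\mapsto\hat{z}[k]$ are real polynomials in $(\mathrm{Re}\,x,\mathrm{Im}\,x)$. The set $W$ is the union of the $2N$ zero loci of the real and imaginary parts of these functions. Each of these polynomials is nonzero: for instance, at $x=e_0$ we get $y=e_0$, so $\hat{y}[k]=1$ for all $k$. For $z$, take $x$ to be the inverse DFT of $e_0$; then $z$ is a multiple of $e_0$ and $\hat z[k]\neq0$. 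Hence each zero locus has positive codimension, and so does the finite union $W$.

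Next, take $x,x'\notin W$ with $B^H(x)=B^H(x')$. Then $B(y)=B(y')$, and both $y$ and $y'$ are real with nonvanishing Fourier coefficients. Theorem \ref{thm:cyclic-bispectrum} then gives $y'=T_k y$ for some $k$. The same argument gives $z'=T_\ell z$ for some $\ell$. Since the entries are nonnegative, taking square roots coordinatewise shows that $(|x'_j|)_j$ is a cyclic shift of $(|x_j|)_j$, and similarly for the Fourier magnitudes. This handles the mismatch between the statement's phrasing $y_j=|x_j|$ and the definition $y_j=|x_j|^2$.

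The only point needing care is that the uniqueness in Theorem \ref{thm:cyclic-bispectrum} is among real signals with nonvanishing spectrum. I will apply it with both signals taken from the generic set, so both $y$ and $y'$ satisfy the hypothesis. I expect no real obstacle beyond confirming the nonemptiness of the complement of each hypersurface, which the explicit witnesses above handle. Note also that the two shifts $k$ and $\ell$ are independent; correlating them with an actual group element is deferred to the subsequent phase retrieval step, not to this statement.
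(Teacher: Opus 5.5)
Your proposal is correct and follows the paper's proof. You obtain nonvanishing of $\hat{y}$ and $\hat{z}$ on a generic set via Lemma~\ref{lem:generic-nonvanishing}, apply Theorem~\ref{thm:cyclic-bispectrum} separately to $y$ and $z$, and take coordinatewise square roots; your explicit witnesses ($x=e_0$, and $x$ with $\hat{x}=e_0$) also supply the properness of the bad set, which the paper leaves implicit.

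One small tightening: you do not need to assume $x'$ is generic too. For real $y$ one has $B(y)(k,0)=\hat{y}[0]\,|\hat{y}[k]|^2$, so $B(y')=B(y)$ already forces $\hat{y}'[k]\neq 0$ for all $k$, and likewise for $z'$. This is the form needed in Corollary~\ref{cor:separation-phase}, where $x'$ is arbitrary.
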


\begin{proof}
By Lemma~\ref{lem:generic-nonvanishing}, generic signals have nonvanishing Fourier coefficients for both $y$ and $z$. By Theorem~\ref{thm:cyclic-bispectrum}, the bispectra $B^M$ and $B^{FM}$ then determine $y$ and $z$- the squares of the moduli- each up to a cyclic shift. This in turn determines the moduli directly, $|x[j]|, |\hat{x}[k]|$ up to cyclic shift.
\end{proof}

\subsection{Reduction to classical phase retrieval}

The following classical result connects magnitude measurements to signal recovery.

\begin{theorem}[1-D phase retrieval {\cite{beinert2018enforcing}}]\label{thm:phase-retrieval}
Let $x \in \C^N$ be a generic vector. If another vector $x'$ satisfies
\[
|x_j| = |x'_j| \quad \text{and} \quad |\hat{x}[k]| = |\hat{x}'[k]| \quad \text{for all } j, k,
\]
then $x' = \lambda x$ for some $\lambda \in S^1$.
\end{theorem}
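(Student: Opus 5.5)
The plan is to prove the statement as a generic injectivity result for the real algebraic map
\[
\Phi : \C^N \to \R^{2N}, \qquad \Phi(x) = \bigl(|x_0|^2, \ldots, |x_{N-1}|^2, |\hat{x}[0]|^2, \ldots, |\hat{x}[N-1]|^2\bigr),
\]
which factors through the quotient $\C^N/S^1$. That quotient has real dimension $2N-1$, while the target has dimension $2N$. So the count of equations against unknowns permits injectivity up to $S^1$. The work is to show that the exceptional set is a proper real algebraic subset, in the sense of the Remark on genericity at the start of the paper.

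\textbf{Step 1: local injectivity.} I would show the differential of $\Phi$, restricted to the orthogonal complement of the phase direction $ix$, has full rank $2N-1$ at one explicit point. A good candidate is a vector with distinct nonzero moduli and a quadratic-phase (chirp-like) structure, for which $\hat{x}$ is explicit. Full rank is a Zariski-open condition on the real-polynomial entries of the Jacobian. Hence generic fibres of $\Phi$ on $\C^N/S^1$ are zero-dimensional, so finite.

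\textbf{Step 2: parametrize competitors.} Any $x'$ with $|\hat{x}'| = |\hat{x}|$ has the form $\hat{x}'[k] = e^{i\phi_k}\hat{x}[k]$. The extra condition $|x'_j| = |x_j|$ then becomes $N$ real equations in the $N$ phases $\phi_k$. Equivalently, in terms of the periodic autocorrelation, $|\hat{x}|^2$ fixes $a[m] = \sum_j x_{j+m}\overline{x_j}$, and $|x|$ fixes the diagonal data.

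I would then form the incidence set
\[
\mathcal{I} = \{(x, x') : \Phi(x) = \Phi(x'),\ x' \notin S^1 x\},
\]
which is semialgebraic. The goal is to prove that its projection to the first factor has dimension $< 2N$. The closure of that projection is then the required proper real algebraic exceptional set.

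\textbf{Step 3: the main obstacle, and how I would attack it.} The hardest step is the dimension bound on $\mathcal{I}$. Finiteness of fibres alone does not rule out a generic fibre consisting of several $S^1$-orbits. Indeed, by Step 1, if the bad set had full dimension, then $\mathcal{I}$ would have dimension $2N$. So it suffices to exhibit a single point $x_0$, chosen so that $\Phi$ is a local diffeomorphism near it, where the fibre of $\Phi$ consists of exactly one $S^1$-orbit. The degree (number of preimage orbits) of $\Phi$ is locally constant on a dense open subset of the image. Combining this with irreducibility of $\C^N$ and an argument that the number of preimage orbits cannot jump generically, this forces the generic fibre to be a single orbit.

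For $x_0$ I would first try a sparse signal, for example support on two adjacent indices with generic nonzero values, perturbed slightly to make it generic. For the sparse signal itself, the equations $|x'_j| = |x_{0,j}|$ force the support of $x'$. The autocorrelation then pins down the relative phase, so the fibre can be computed by hand. This is where I expect the real difficulty: controlling the fibre under the perturbation to a generic point, or alternatively importing the aperiodic uniqueness theorem of \cite{beinert2018enforcing} by embedding.
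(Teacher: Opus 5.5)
\textbf{There is a genuine gap in Step 3, and it cannot be closed, because the statement is false for the $N$-point DFT.}

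The problem starts with your dimension count. By Parseval, $\sum_k|\hat{x}[k]|^2$ is a fixed multiple of $\sum_j|x_j|^2$, so the image of $\Phi$ lies in a hyperplane. You therefore have only $2N-1$ independent real equations for the $2N-1$ real unknowns of $\C^N/S^1$: the problem is exactly determined, not overdetermined.

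For such a map, Step 1 does give finite generic fibres. But nothing forces the number of \emph{real} preimage orbits to be generically constant. It is locally constant only off the real critical-value hypersurface, whose complement is typically disconnected. Irreducibility controls the complex degree, not the real count. So one point with a singleton fibre, where $\Phi$ is a local diffeomorphism, gives uniqueness on a neighbourhood of that point, not on the complement of a proper algebraic set. Your sentence ``the number of preimage orbits cannot jump generically'' is exactly the false step.

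Your proposed base point shows the jump concretely. At a sparse signal the differential of $|x_j|^2$ vanishes wherever $x_j=0$, so $\Phi$ is not a local diffeomorphism there. For $N=3$, perturb $(r_0,r_1e^{i\alpha},0)$ to $(r_0,r_1e^{i\alpha},\epsilon e^{i\beta})$. To leading order in $\epsilon$ the constraints reduce to one sinusoidal equation in the phase of the small entry, which generically has two roots. The singleton fibre splits into two orbits.

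Here are explicit counterexamples:
\begin{itemize}
\item For $N=2$, $\overline{x}$ has the same $|x_j|$ and $|\hat{x}[k]|$ as $x$.
\item For any $N$, a unimodular $x$ and its shift $T_1x$ share both magnitude vectors. If $D\Phi$ is invertible at $x$ (hence at $T_1x$, by equivariance), the inverse function theorem spreads this ambiguity to an open set of signals. No proper real algebraic subset can contain an open set.
\item For $N=3$, take $x=(1,1,i)$ and $T_1x=(1,i,1)$. In polar coordinates $D\Phi$ is block triangular with an invertible diagonal modulus block. For $N=3$, $|\hat x|^2$ is determined by $\sum_j|x_j|^2$ and $a_1=\sum_j x_{j+1}\overline{x_j}$. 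Along phase perturbations $x_j\mapsto e^{it_j}x_j$, $a_1$ has derivative $i\sum_j(t_{j+1}-t_j)d_j$ with $d_j=x_{j+1}\overline{x_j}$. Here $d=(1,i,-i)$, and this phase block is invertible because the three points are not collinear.
\item For $N=4$, one checks similarly that $D\Phi$ is invertible at $x=(1,1,i,ie^{i\pi/4})$.
\end{itemize}

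The paper gives no argument of its own beyond citing \cite{beinert2018enforcing}. That result concerns the discrete-time Fourier intensity $|\hat{x}(\omega)|$ on the whole circle, i.e.\ the full aperiodic autocorrelation with $2N-1$ lags. The $N$ DFT magnitudes only see its periodized version. This is also why your fallback of importing the aperiodic theorem ``by embedding'' fails: the missing lags are not recoverable from the given data. The natural repair is to assume the DTFT intensity, equivalently the DFT magnitudes of $x$ zero-padded to length at least $2N-1$. That is the setting of \cite{beinert2018enforcing}, and it makes the problem overdetermined.
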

For the exact genericity condition see \cite{beinert2018enforcing}.

\begin{corollary}[Separation up to global phase]\label{cor:separation-phase}
For generic $x \in \C^N$,
\[
B^H(x) = B^H(x') \implies x' = \lambda \cdot (k_0, n_0, 0) \cdot x \quad \text{for some } \lambda \in S^1, \, k_0, n_0 \in \Z_N.
\]
That is, the Heisenberg bispectrum determines the $\HN$-orbit up to a global phase.
\end{corollary}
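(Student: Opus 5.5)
The plan is to combine Theorem~\ref{thm:separation-shift} with Theorem~\ref{thm:phase-retrieval}. The only real work is aligning the two unknown cyclic shifts by a group element before invoking phase retrieval.

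Suppose $B^H(x)=B^H(x')$ with $x$ generic. By Theorem~\ref{thm:separation-shift}, the moduli vector $(|x'_j|)_j$ is a cyclic shift of $(|x_j|)_j$, say $|x'_j| = |x_{j+k_0}|$ for all $j$. Likewise the Fourier moduli satisfy $|\hat{x}'[k]| = |\hat{x}[k-n_0]|$ for some $k_0,n_0\in\Z_N$. The shift amounts are extracted directly from the statement of Theorem~\ref{thm:separation-shift}.

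Next we form $u = (k_0,n_0,0)\cdot x$, concretely $M_{n_0}T_{k_0}x$ up to the ordering convention of the group law. Any ordering discrepancy only changes $u$ by a power of $\zeta$, which is absorbed into $\lambda$. From the computations in the proof of Theorem~\ref{thm:invariance} we get:
\begin{itemize}
\item $T_{k_0}$ shifts the moduli $|x_j|$ and leaves $|\hat{x}[k]|$ unchanged;
\item $M_{n_0}$ leaves $|x_j|$ unchanged and shifts $|\hat{x}[k]|$ by $n_0$.
\end{itemize}
Hence $|u_j|=|x'_j|$ and $|\hat{u}[k]|=|\hat{x}'[k]|$ for all $j,k$. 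The shift directions must be matched to the signs above; if they disagree, replace $k_0$ or $n_0$ by its negative.

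Then we apply Theorem~\ref{thm:phase-retrieval} to $u$ and $x'$. This yields $x'=\lambda u = \lambda\,(k_0,n_0,0)\cdot x$ with $\lambda\in S^1$, as claimed.

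\textbf{Main obstacle: genericity.} Theorem~\ref{thm:phase-retrieval} requires the reference vector, here $u$, to be generic, not merely $x$. We handle this in two steps.
\begin{itemize}
\item Let $W\subset\C^N$ be the exceptional proper real algebraic set for phase retrieval. Each group element acts as a real-linear automorphism, so each $g^{-1}W$ is again a proper real algebraic subset.
\item $\HN$ is finite, so $\bigcup_{g\in\HN} g^{-1}W$ is a proper real algebraic subset. Taking its union with the exceptional set of Lemma~\ref{lem:generic-nonvanishing} gives a single proper real algebraic set to exclude.
\end{itemize}
For $x$ outside this set, every translate $g\cdot x$, in particular $u$, is generic for phase retrieval, and the argument goes through. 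The roles of $x$ and $x'$ are not symmetric, but only $x$ is assumed generic and only $u$ needs to be.
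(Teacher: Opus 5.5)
Your proposal is correct relative to the results the paper states, and it is essentially the paper's proof: read off $k_0,n_0$ from Theorem~\ref{thm:separation-shift}, realign by $(k_0,n_0,0)\cdot x$, and invoke Theorem~\ref{thm:phase-retrieval}; it also makes one genuine improvement, noticing that phase retrieval must be applied with the translate $u$ (the paper's $x''$) as the generic reference vector and securing this by excluding $\bigcup_{g\in\HN} g^{-1}W$, a point the paper passes over in silence. Be aware, however, that both proofs rest entirely on Theorem~\ref{thm:phase-retrieval}, which appears to fail for the length-$N$ cyclic DFT used here: with the moduli $|x_j|$ fixed, the map from the torus $T^{N-1}$ of phases modulo a global phase to the $(N-1)$-dimensional affine space (cut out by Parseval) of admissible $(|\hat{x}[k]|^2)_k$ is a proper map to a noncompact target of the same dimension, hence has degree $0$, so its fibres over generic points of the image have at least two points, and generic $x$ then admits $x'\notin S^1x$ with the same time and Fourier moduli (for $N=2$ take $x'=\bar{x}$), which gives $B^H(x')=B^H(x)$ without $x'$ being of the form $\lambda\,(k_0,n_0,0)\cdot x$.
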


\begin{proof}
By Theorem~\ref{thm:separation-shift}, the bispectrum determines $(|x_j|)$ and $(|\hat{x}[k]|)$ up to cyclic shifts. That is, there exist $k_0, n_0 \in \Z_N$ such that
\[
|x'_j| = |x_{j + k_0}|, \qquad |\hat{x}'[k]| = |\hat{x}[k + n_0]|
\]
for all $j, k$. Let $x'' = (k_0, n_0, 0) \cdot x$. Then $|x''_j| = |x'_j|$ and $|\hat{x}''[k]| = |\hat{x}'[k]|$ for all $j, k$. By Theorem~\ref{thm:phase-retrieval}, $x' = \lambda x''$ for some $\lambda \in S^1$.
\end{proof}
In order to fully recover the orbit we must force $\lambda\in S^1$ to be an N-th root of unity.
\subsection{Eliminating the global phase}

The magnitude data is invariant under multiplication by a global phase $e^{i\theta}$, so the bispectrum-type invariants cannot distinguish between $x$ and $e^{i\theta}x$. We now show that a single degree-$N$ polynomial invariant resolves this ambiguity.

\begin{definition}
Define the degree-$N$ polynomial invariant
\[
I_N(x) = x_0^N + x_1^N + \cdots + x_{N-1}^N.
\]
\end{definition}

\begin{lemma}\label{lem:IN-invariant}
$I_N$ is invariant under $\HN$.
\end{lemma}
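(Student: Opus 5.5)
Since $\HN$ is generated by the time shifts $T_k$, the frequency modulations $M_n$ and the global phases $Z_m$, and $I_N$ is invariant under a group element as soon as it is invariant under each factor in a product expressing that element, it suffices to check invariance of $I_N$ under each of the three families of generators separately. Here $T_k$, $M_n$ and $Z_m$ act on coordinates exactly as described in the definition of the action, so each check reduces to a direct substitution into $I_N(x)=\sum_{j=0}^{N-1} x_j^N$.

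For the time shift $T_k$, the coordinates are permuted cyclically, $x_j\mapsto x_{j+k}$ with indices mod $N$. Since $I_N$ is a symmetric function of $x_0,\ldots,x_{N-1}$, it is unchanged:
\[
I_N(T_k x)=\sum_{j} x_{j+k}^N=\sum_j x_j^N .
\]
For the modulation $M_n$, each coordinate $x_j$ is multiplied by $\zeta^{nj}$, so its $N$-th power picks up the factor $\zeta^{nNj}=(\zeta^N)^{nj}=1$. Hence every summand $x_j^N$ is fixed individually. For the global phase $Z_m$, every coordinate is multiplied by $\zeta^m$, so each summand picks up $\zeta^{mN}=1$, and again $I_N$ is fixed.

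There is no real obstacle. The only point to verify is that the composite group elements $(k,n,m)$ are indeed products of these generators, so that invariance under the generators implies invariance under all of $\HN$. The commutation relation $T_1M_1=\zeta M_1T_1$ shows that the subgroup generated by $T_1,M_1$ already contains the scalars $\zeta^m$. Thus $\HN$ is generated by $T_1$, $M_1$ and $Z_1$, and invariance follows.
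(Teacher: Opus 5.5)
Your proof is correct and matches the paper's: it checks invariance of $I_N$ under time shifts (reindexing the sum), under modulations ($\zeta^{Nnj}=1$) and under global phases ($\zeta^{Nm}=1$). Your extra remark that these families generate the image of $\HN$, with the commutator $T_1M_1T_1^{-1}M_1^{-1}=\zeta$ supplying the scalars, makes explicit a point the paper leaves implicit.
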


\begin{proof}
For time shifts: $I_N(T_k x) = \sum_j x_{j+k}^N = \sum_j x_j^N = I_N(x)$ by reindexing.

For frequency modulations: $I_N(M_n x) = \sum_j (\zeta^{nj} x_j)^N = \sum_j \zeta^{Nnj} x_j^N = \sum_j x_j^N = I_N(x)$ since $\zeta^N = 1$.

For global phase: $I_N(Z_m x) = \sum_j (\zeta^m x_j)^N = \zeta^{Nm} \sum_j x_j^N = I_N(x)$ since $\zeta^{Nm} = 1$.
\end{proof}

\begin{theorem}[Full orbit separation]\label{thm:full-separation}
Let $x \in \C^N$ be generic. If $x'$ satisfies
\[
B^H(x) = B^H(x') \quad \text{and} \quad I_N(x) = I_N(x'),
\]
then $x'$ lies in the same $\HN$-orbit as $x$.
\end{theorem}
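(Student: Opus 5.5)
The plan is to combine Corollary~\ref{cor:separation-phase} with the invariance of $I_N$ (Lemma~\ref{lem:IN-invariant}) to pin down the residual global phase. Since $x$ is generic and $B^H(x)=B^H(x')$, Corollary~\ref{cor:separation-phase} gives
\[
x' = \lambda\, x'' , \qquad x'' = (k_0,n_0,0)\cdot x,
\]
for some $\lambda\in S^1$ and $k_0,n_0\in\Z_N$. It therefore suffices to show that $\lambda^N=1$. Indeed, then $\lambda=\zeta^m$ for some $m$, and $x' = Z_m x'' = (k_0,n_0,m)\cdot x$ (up to the harmless reordering of group factors) lies in the $\HN$-orbit of $x$.

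To force $\lambda^N=1$, I would use the value of $I_N$. Since $I_N$ is homogeneous of degree $N$, we have $I_N(\lambda x'')=\lambda^N I_N(x'')$. By Lemma~\ref{lem:IN-invariant}, $I_N(x'')=I_N(x)$. The hypothesis $I_N(x')=I_N(x)$ then reads
\[
\lambda^N I_N(x) = I_N(x).
\]
Consequently, whenever $I_N(x)\neq 0$ we may cancel and conclude $\lambda^N=1$.

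The remaining point is to absorb the condition $I_N(x)\neq0$ into genericity. The zero set of $I_N$ is a proper complex hypersurface in $\C^N$; for instance $I_N(1,0,\dots,0)=1$. Viewed in $\R^{2N}$, its real and imaginary parts cut out a real algebraic subset of real codimension $2$, hence of dimension $<2N$. The generic set of the theorem will be taken as the intersection of three open sets: the complement of $\{I_N=0\}$, the generic set of Lemma~\ref{lem:generic-nonvanishing}, and the genericity set of Theorem~\ref{thm:phase-retrieval}. Each of these has complement contained in a proper real algebraic subset, so the intersection is again generic in the sense of the paper's remark.

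The main obstacle is the bookkeeping in this last step. One must check that the genericity hypothesis of Theorem~\ref{thm:phase-retrieval} is imposed on $x''$, which is what Corollary~\ref{cor:separation-phase} actually needs, rather than on $x$ itself. This requires that the generic set of \cite{beinert2018enforcing} be stable under time shifts and modulations. I would argue this by replacing that set with its intersection over the $N^2$ translates by $(k,n,0)$. Each translate is again the complement of a proper real algebraic set, and a finite intersection of such sets is still generic. The algebraic identity $\lambda^N I_N(x)=I_N(x)$ itself is routine.
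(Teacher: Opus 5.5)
Granting Corollary~\ref{cor:separation-phase}, your argument is correct and is essentially the paper's proof: homogeneity and $\HN$-invariance of $I_N$ turn $I_N(x')=I_N(x)$ into $\lambda^N I_N(x)=I_N(x)$, which forces $\lambda^N=1$ and hence $x'=Z_m(k_0,n_0,0)\cdot x$. You are more careful than the paper, which cancels $I_N(x)$ without noting that $I_N(x)\neq 0$ must be part of the genericity hypothesis, and which tacitly needs the phase-retrieval genericity for the translate $x''$ rather than for $x$; your intersection with $\{I_N\neq 0\}$ and over the $N^2$ translates handles both points.
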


\begin{proof}
By Corollary~\ref{cor:separation-phase}, $x' = \lambda \cdot g \cdot x$ for some $\lambda \in S^1$ and $g \in \HN$. Since $I_N$ is $\HN$-invariant, we have
\[
I_N(x') = I_N(\lambda \cdot g \cdot x) = \lambda^N I_N(g \cdot x) = \lambda^N I_N(x).
\]
The condition $I_N(x') = I_N(x)$ thus requires $\lambda^N = 1$, so $\lambda = \zeta^m$ for some $m \in \Z_N$. But then $x' = \zeta^m \cdot g \cdot x = Z_m \cdot g \cdot x$, which lies in the $\HN$-orbit of $x$.
\end{proof}

\subsection{Small example with $N=3$}

Let $x=(x_0,x_1,x_2)\in \mathbb{C}^3$ and let $\hat{y}[k]=|x_0|^2+\zeta_3^k|x_1|^2+\zeta_3^{2k}|x_2|^2$ for $k=0,1,2$. We have the following modulus unitary invariants of the form
\begin{align*}
x_0\overline{x_0}+x_1\overline{x_1}+x_2\overline{x_2}=\hat{y}[0],\\
(x_0\overline{x_0}+\zeta_3x_1\overline{x_1}+\zeta_3^2x_2\overline{x_2})*(x_0\overline{x_0}+\zeta_3^2x_1\overline{x_1}+\zeta_3x_2\overline{x_2})=\hat{y}[1]\hat{y}[2]\\
B^M(x)(1,1)=(x_0\overline{x_0}+\zeta_3x_1\overline{x_1}+\zeta_3^2x_2\overline{x_2})^3=\hat{y}[1]^3
\end{align*}
These invariants determine $(|x_0|,|x_1|,|x_2|)\in \mathbb{R}^3$ up to cyclic shift.

Similarly, we let $\hat{z}[k]=|\hat{x}[0]|^2+\zeta_3^k|\hat{x}[1]|^2+\zeta_3^{2k}|\hat{x}[2]|^2$ to obtain the Fourier modulus invariants
\begin{align*}
\hat{x}[0]\overline{\hat{x}[0]}+\hat{x}[1]\overline{\hat{x}[1]}+\hat{x}[2]\overline{\hat{x}[2]}=\hat{z}[0],\\
(\hat{x}[0]\overline{\hat{x}[0]}+\zeta_3\hat{x}[1]\overline{\hat{x}[1]}+\zeta_3^2\hat{x}[2]\overline{\hat{x}[2]})*(\hat{x}[0]\overline{\hat{x}[0]}+\zeta_3^2\hat{x}[1]\overline{\hat{x}[1]}+\zeta_3\hat{x}[2]\overline{\hat{x}[2]})=\hat{z}[1]\hat{z}[2]\\
B^{FM}(x)(1,1)=(\hat{x}[0]\overline{\hat{x}[0]}+\zeta_3\hat{x}[1]\overline{\hat{x}[1]}+\zeta_3^2\hat{x}[2]\overline{\hat{x}[2]})^3=\hat{z}[1]^3
\end{align*}

Combining these with the polynomial invariant $$x_0^3+x_1^3+x_2^3$$ completes the generic separating set.

\section{Summary and Discussion}

\begin{theorem}[Main result]\label{thm:main}
For the Heisenberg action on $\C^N$ with $N > 3$:
\begin{enumerate}
\item There are no nonconstant polynomial invariants of degree $< N$.
\item Degree-6 unitary invariants (the Heisenberg bispectrum) separate generic orbits up to a global phase.
\item Adding the single degree-$N$ invariant $I_N$ achieves full separation.
\end{enumerate}
Thus $\gamma^U(\C^N, \HN)\leq N$ while $\gamma(\C^N, \HN) \geq N$.
\end{theorem}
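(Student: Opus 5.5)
The main result is essentially an assembly of statements already proved, so the plan is to verify each of the three items by citing the earlier results and then read off the two degree inequalities from Definition~\ref{def:gamma}.

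\textbf{Item (1).} This is exactly Proposition~\ref{thm:no-low-degree}. Invariance under the central subgroup $\{Z_m\}$ forces every invariant monomial to have total degree divisible by $N$.

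\textbf{Item (2).} By Theorem~\ref{thm:invariance}, the entries of $B^H$ are $\HN$-invariant. The remark following Definition~\ref{def:heisenberg-bispectrum} shows they are unitary invariants of degree $6$ in $x,\bar x$: each $y_j$ and $z_k$ is quadratic in $x,\bar x$, since $\hat{x}$ is linear in $x$, and each bispectrum entry is cubic in $y$ or $z$. Corollary~\ref{cor:separation-phase} then says that for generic $x$, $B^H(x)=B^H(x')$ forces $x'=\lambda g x$ with $\lambda\in S^1$ and $g\in\HN$. The genericity set is the union of two proper real algebraic subsets: the one from Lemma~\ref{lem:generic-nonvanishing} and the one from Theorem~\ref{thm:phase-retrieval}. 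A finite union of proper real algebraic subsets is again a proper real algebraic subset, so this is still a generic condition.

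\textbf{Item (3).} Lemma~\ref{lem:IN-invariant} shows $I_N$ is invariant, and Theorem~\ref{thm:full-separation} shows that adding it gives full separation on the complement of the union of the previous exceptional set with $\{I_N(x)=0\}$. I would point out explicitly that the step $\lambda^N I_N(x)=I_N(x)\Rightarrow\lambda^N=1$ needs $I_N(x)\neq0$. The set $\{I_N(x)=0\}$ is a proper real algebraic subset, since $I_N(1,0,\dots,0)=1$. So excluding it keeps the condition generic, and this closes a small gap in the cited proof.

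\textbf{Degree bounds.} The separating set $\{B^H\text{ entries}\}\cup\{I_N\}$ consists of unitary invariants of degree at most $\max(6,N)$. For $N\ge 6$ this gives $\gamma^U\le N$. For $N=4,5$ the bound produced is $6$, not $N$. Since the statement says $N>3$, this is where I expect the main obstacle.

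To handle $N=4,5$, I would first try to lower the bispectrum degree. The $y$ and $z$ data are determined by lower-degree real invariants: the power-spectrum entries $|\hat y[i]|^2$ have degree $4$, and the full bispectrum has degree $6$. One option is to replace the bispectrum entries by products such as $\hat{y}[i]\hat{y}[j]\overline{\hat{y}[i+j]}$ restricted to small indices, but these are still degree $6$ in $x$. The realistic fix is therefore to state the conclusion as $\gamma^U(\C^N,\HN)\le\max(6,N)$, which equals $N$ for $N\ge6$. For $N=4,5$ I would remark that the bound $6$ still holds, and the gap with $\gamma\ge N$ is not claimed there.

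The lower bound $\gamma(\C^N,\HN)\ge N$ follows immediately from item (1): with no nonconstant polynomial invariants of degree $<N$, only constants are available below degree $N$, and constants cannot separate generic orbits.
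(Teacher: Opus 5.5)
Your assembly is the same as the paper's. Theorem~\ref{thm:main} has no separate proof; it collects Proposition~\ref{thm:no-low-degree}, Corollary~\ref{cor:separation-phase}, Lemma~\ref{lem:IN-invariant} and Theorem~\ref{thm:full-separation}. Both of your amendments are correct and go beyond the paper:
\begin{itemize}
\item The proof of Theorem~\ref{thm:full-separation} silently cancels $I_N(x)$, so $\{I_N=0\}$ must be added to the exceptional set.
\item The entries of $B^H$ have degree $6$, so the set $B^H\cup\{I_N\}$ only gives $\gamma^U(\C^N,\HN)\le\max(6,N)$. The stated bound $\gamma^U\le N$ is therefore unsupported for $N=4,5$, and the paper never addresses this.
\end{itemize}

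There is a more serious problem in the black box that both you and the paper use for item (2): Theorem~\ref{thm:phase-retrieval} is false for the $N$-point DFT used here. Fix $\rho_j=|x_j|>0$ and normalize $\theta_0=0$. Consider $\Phi\colon T^{N-1}\to\R^{N-1}$, $\theta\mapsto(|\hat x[k]|^2)_{k=1}^{N-1}$; the $k=0$ entry is fixed by Parseval.
\begin{itemize}
\item $\Phi$ is a smooth map from a compact manifold to a noncompact one of the same dimension, so its mod~$2$ degree is $0$. Every regular value in the image therefore has an even number of preimages, hence at least two.
\item For generic $\theta$, $\Phi(\theta)$ is a regular value. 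If $d\Phi$ were generically singular, the fibres would instead be positive-dimensional, which is even worse.
\item So a generic $x$ has a partner $x'\notin S^1x$ with $|x'_j|=|x_j|$ and $|\hat x'[k]|=|\hat x[k]|$ for all $j,k$. For $N=2$ the partner is simply $\bar x$.
\end{itemize}
Now $B^H$ depends only on $(y,z)$, and a vector with nonvanishing DFT has trivial stabilizer under cyclic shifts. Hence $B^H(x')=B^H(x)$ while $x'\notin S^1\cdot\HN\cdot x$, so item (2) is false as stated. The uniqueness result of \cite{beinert2018enforcing} uses the DTFT modulus, i.e.\ the aperiodic autocorrelation. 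That carries $2N-1$ real parameters, whereas the circular data carries only $N$.

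Consequently item (3), for the set $B^H\cup\{I_N\}$, can hold only if $|I_N|$ generically distinguishes the distinct $S^1$-orbits that share the same $(y,z)$. Establishing that needs a genuinely new argument. For $N=2$ it fails outright, since $|I_2(\bar x)|=|I_2(x)|$.
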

\begin{remark}
It is unknown whether $\gamma(\C^N, \HN)= N$, though one should keep in mind that the polynomial invariants are much more limited and hence a constructive result resembling the unitary case is unlikely. 
\end{remark}

The Heisenberg bispectrum exhibits a new phenomenon: \emph{separating invariants may exist in low degree even when the invariant ring has no low-degree polynomial invariants}. The construction relies fundamentally on results from phase retrieval, which studies when signals can be recovered from magnitude measurements \cite{beinert2018enforcing}. The classical result that magnitude and Fourier magnitudes generically determine a signal up to a global phase forms the bridge between our bispectral invariants and orbit recovery.

\subsection{Open problems}

\begin{enumerate}
\item We prove $\gamma^U(\C^N, \HN) \leq N$ but can we reduce this degree bound even more? Most of the work is being done by invariants of degree 6.

\item Can the method of studying unitary invariants give improved degree bounds for orbit recovery for other groups?

\end{enumerate}


\bibliographystyle{plain}
\bibliography{ref}
\end{document}